\pgfplotsset{compat=newest,cycle list name=exotic}
\newcommand{\R}{\mathds{R}}
\newcommand{\co}{\operatorname{co}}
\newcommand{\supp}[2]{\operatorname{supp}(#2,#1)}
\newcommand{\dom}{\mathop{\mathrm{dom}}}
\algrenewcommand\alglinenumber[1]{\footnotesize Step #1:}%
\newcommand{\epi}{\mathop{\mathrm{epi}}}
\newtheorem{lemma}{Lemma}
\newtheorem{theorem}{Theorem}
\newtheorem{proposition}{Proposition}
\theoremstyle{definition}
\newtheorem{assumption}{Assumption}
\newtheorem{definition}{Definition}
\newtheorem{example}{Example}
\newtheorem{remark}{Remark}
\newcommand{\Argmin}{\mathop{\mathrm{Argmin}}}
\newcommand{\keywords}[1]{\textbf{Keywords:}#1}
\newcommand{\subclass}[1]{\textbf{MSC Classification:}#1}
\begin{document}

\title{Global minimisation of nonconvex functions by generalising the mirror descent method}
\author{ R. D\'iaz Mill\'an \and J. Ugon}

\maketitle


\begin{abstract}
In this paper, we introduce a mirror descent algorithm for minimising abstract convex functions. This algorithm relies on solving a proximal-type subproblem with an abstract Bregman divergence based proximal term. We also introduce a conceptual proximal point algorithm for abstract convex minimisation. We prove the convergence of both algorithms when the set of abstract linear functions forms a linear space. This latter assumption can be relaxed only to require the set of abstract linear functions to be closed under the sum, which is a classical assumption in abstract convexity.  We provide numerical examples of the minimisation of nonconvex functions with the presented algorithms.

\keywords{ abstract convexity, proximal point, mirror descent, Bregman divergence}

\subclass{49C26 \and 49J52 }
\end{abstract}

\section{Introduction}
Abstract convexity was first proposed by Kutateladze and Rubinov~\cite{kutateladze.ea:1972,kutateladze.ea:1976} to generalize the notion of (Minkowski) duality to nonconvex functions and was extensively developed in the subsequent decades~\cite{andramonov:2002,bui.ea:2021,burachik.ea:2005,burachik.ea:2007,diaz-millan.ea:2023,pallaschke.ea:1997,rubinov:2000,singer:1997}, to name a few. A function is abstract convex with respect to a set of functions $L$ if it is the supremum of functions from $L$.

Many theoretical results from convex analysis have been generalised to the framework of abstract convexity, including duality~\cite{kutateladze.ea:1972}, conjugacy~\cite{rubinov:2000}, the subdifferential~\cite{diaz-millan.ea:2025, rubinov:2000} and normal cones~\cite{jeyakumar.ea:2007}. However, few algorithms have been proposed to build on that theory to minimise abstract convex functions. A notable exception is the cutting angle method~\cite{Andramonov.ea:1999,andramonov:2002} to minimise Lipschitz continuous functions. Some conceptual algorithms are also described in~\cite[chapter 9]{rubinov:2000}. Finally, a proximal point algorithm targeted at minimising weakly convex functions is proposed in~\cite{bednarczuk.ea:2025}. To our knowledge, there exists no algorithmic framework generically applicable in abstract convexity.

Consider the problem of minimising a $f:X\rightarrow \R$ stated as follows: 
 \begin{equation}\label{problem}
\text{minimise } f(x) \text{ subject to } x\in X.
 \end{equation}

Optimisation algorithms are central to tackling optimisation problems like~\eqref{problem}. Within the literature, there is a plethora of algorithms and their variations specifically tailored for convex functions. However, the existence of algorithms for non-convex optimisation problems is limited due to computational costs and the challenge of proving convergence. Indeed, most of these algorithms focus on generalising their convex versions to find stationary points, which satisfy a necessary, but not sufficient condition for (global) optimality. Our focus, in this paper, is different: we aim to generalise an algorithm, namely the mirror descent method, to globally minimise any function, provided it is expressed as an abstract convex one using an appropriate selection of a set of \emph{abstract linear functions}, with the condition that this set forms a linear vector space. We also describe a conceptual Bregman proximal point algorithm for abstract convex functions. 

The first algorithm, referred to as \emph{abstract mirror descent}, exhibits the convergence of the $limitinf$ of the sequence of function values. We call the second algorithm \emph{abstract Bregman proximal-point method}, and we provide proof of convergence for the function values of the iterates. Additionally, we demonstrate that all cluster points in the sequence generated by the algorithm belong to the solution set of the problem. We also provide numerical results to test these algorithms on real (nonconvex) abstract convex problems.

The paper is structured into four sections. Initially, in Section~\ref{sec:preliminaries}, we revisit crucial results and definitions pertaining to abstract convexity, and introduce the notations we use in the paper. In Section~\ref{sec:projection}, we introduce the notion of abstract Bregman divergences, distances and projection. Then, Section~\ref{sec:mirror} and Section~\ref{sec:proximal} are dedicated to presenting the algorithms with their convergence and numerical examples, respectively.

\section{Preliminaries}\label{sec:preliminaries}
In this paper, we are interested in solving the following optimisation problem: 
\begin{equation}\label{eq:prob}
    \min_{x\in X} f(x),
\end{equation}
where $X$ is a vector space.
We start by recalling the main definitions and properties of abstract convexity. We will use the notations introduced by~Rubinov~\cite{rubinov:2000}.
  Let $X$ be a set and $\mathbb{F}$ be the set of all real-valued functions with domain on $X$ as well as the function uniformly equal to $-\infty$:
  \[\mathbb{F} := \{f:X\to \R\cup\{+\infty\}\}\cup \{-\infty:X\to \R, -\infty(x)=-\infty\}.\] 
  Let $L\subset \mathbb{F}$ be a subset of functions and define the abstract convexity function as follows:

  \begin{definition}[Abstract convex function~\cite{rubinov:2000}]
    A function $f$ is said to be \emph{$L$-convex} if there exists a subset $U\subseteq L$ such that for any $x\in X$, $f(x) = \sup_{u\in U} u(x)$.
  \end{definition}

  
  To characterise the minimal $L$-convex set containing a given subset, we introduce the following definition of the abstract convex hull. 
  \begin{definition}[Abstract Convex Hull of a set~\cite{rubinov:2000}]\label{def:convexhullset}
    The intersection of all $L$-convex sets containing a set $C\subset L$ is called the \emph{abstract convex hull} of $C$ and denoted $\co_L C$. 
  \end{definition}
The following property is a direct consequence of the definition.

  \begin{proposition}[{\cite[Proposition 1.1 and Corollary 1.1]{rubinov:2000}}]\label{prop:convexhullsup}
    The $L$-convex hull of a set $C$ is $L$-convex. More specifically, it is the smallest $L$-convex set containing $C$, and $\co_L C = \supp{L}{\sup_{l\in C} l}$.
  \end{proposition}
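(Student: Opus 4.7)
The plan is to unfold the definition of \emph{$L$-convex set} (following Rubinov, a subset $U\subseteq L$ is $L$-convex precisely when $U = \supp{L}{f}$ for some $f\in\mathbb{F}$, where $\supp{L}{f}=\{l\in L : l\le f\}$) and then to verify the three claims of the proposition by elementary set manipulations. The single observation driving the whole argument is that the map $f\mapsto \supp{L}{f}$ is order-preserving: if $f\le g$ pointwise on $X$, then $\supp{L}{f}\subseteq \supp{L}{g}$.

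First, I would show that an arbitrary intersection of $L$-convex sets is again $L$-convex. Given a family $\{U_\alpha\}_{\alpha\in A}$ with $U_\alpha = \supp{L}{f_\alpha}$, a direct rewrite of the membership condition yields
\[
\bigcap_{\alpha\in A} U_\alpha \;=\; \supp{L}{\inf_{\alpha\in A} f_\alpha},
\]
which is itself a support set and hence $L$-convex. Applying this to the family of all $L$-convex subsets of $L$ containing $C$ shows that $\co_L C$ is $L$-convex, and it is the smallest such set by construction. This handles the first two assertions of the proposition.

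The explicit formula $\co_L C = \supp{L}{\sup_{l\in C} l}$ then follows from a two-sided inclusion. For $\co_L C \subseteq \supp{L}{\sup_{l\in C} l}$, I would exhibit $\supp{L}{\sup_{l\in C} l}$ as an $L$-convex set containing $C$: every $l\in C$ satisfies $l\le \sup_{l'\in C} l'$, so $l$ lies in the support set, and minimality of $\co_L C$ gives the inclusion. For the reverse direction, I would take any $L$-convex set $V = \supp{L}{g}$ with $C\subseteq V$, observe that $l\le g$ for every $l\in C$ forces $\sup_{l\in C} l \le g$, and then invoke order-preservation of the support-set map to conclude $\supp{L}{\sup_{l\in C} l}\subseteq V$; intersecting over all such $V$ finishes the proof.

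The argument has essentially no serious obstacle. The only points requiring attention are keeping track of which direction of monotonicity is being used in each inclusion, and checking that $\inf_{\alpha} f_\alpha$ and $\sup_{l\in C} l$ remain legitimate inputs for $\supp{L}{\cdot}$; this is automatic because $\mathbb{F}$ has been defined to be closed under the relevant pointwise operations once one allows $+\infty$ and the identically $-\infty$ function.
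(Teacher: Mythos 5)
The paper itself gives no proof of this proposition---it is quoted directly from Rubinov (Proposition 1.1 and Corollary 1.1)---and your argument is the standard support-set proof of that result: it is correct, and the two-sided inclusion identifying $\co_L C$ with $\supp{L}{\sup_{l\in C} l}$ is exactly the right mechanism. One small imprecision: your closing claim that $\inf_{\alpha} f_\alpha$ is ``automatically'' a legitimate input is not literally true for $\mathbb{F}$ as the paper defines it, since a pointwise infimum can take the value $-\infty$ at some points without being identically $-\infty$; this is harmless here, because your direct argument that $\supp{L}{\sup_{l\in C} l}$ is an $L$-convex set containing $C$ and is contained in every such set already delivers the $L$-convexity, minimality, and the formula for the hull without appealing to the intersection-via-infimum lemma.
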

  
Building on the classical subdifferential from convex analysis, we now define the abstract subdifferential within the framework of abstract convexity.  

  \begin{definition}[Abstract subdifferential~\cite{rubinov:2000}]
    The $L$-subdifferential of a function at a point $x\in X$ is the set
    \[ \partial_L f(x) = \{l\in L: f(y)\geq f(x) + l(y) - l(x), \forall y\in X\}. \]
  \end{definition}

  We also define the abstract conjugate as follows:
  \begin{definition}[$L$-conjugate]
  Let $f$ be an $L$-convex function. Then
  \[f_L^*(u) = \sup_{x\in X} u(x) - f(x) \]
  \end{definition}

  The $L$-conjugate generalises the well known result:
  \begin{proposition}[{\cite[Proposition 7.7]{rubinov:2000}}]\label{prop:moreau}
  Let $f$ be a $L$-convex function on $X$. Then,
      For any $u\in L$ and any $x\in X$, we have that
      \[f(x) + f_L^*(u) \geq u(x)\] with equality if and only if $u\in \partial_L f(x)$.
  \end{proposition}
 
  We recall that the sublevel sets (denoted by $S_c(f)$) and the epigraph of a function $f$ ($\epi(f)$) are defined as follows:
  \begin{align*}
    S_c(f) &:= \{x\in X: f(x)\leq c\}\\
    \epi(f) &:= \{(x,c) \in X\times \R: f(x)\leq c\}.
  \end{align*}
  We say that the sum rule applies if the conditions for the following theorem are satisfied.
  
    \begin{theorem}[Sum Rule~\cite{diaz-millan.ea:2025}]\label{thm:sumsubdif}
  Consider two families of functions with domain on $X$, $L_1$ and $L_2$, and two functions $f_1:X\to \R$ and $f_2:X\to \R$ be $L_1$-convex and $L_2$-convex respectively such that $\dom f_1 \cap \dom f_2 \neq \emptyset$.  Assume that $\supp{H_{L_1}}{f_1}+\supp{H_{L_2}}{f_2} = \supp{H_{L_1}+H_{L_2}}{f_1+f_2}$. Then for all $x\in \dom f_1 \cap \dom f_2$, we have the following:
   \[ \partial_{L_1+L_2} (f_1+f_2) = \partial_{L_1} f_1 (x) + \partial_{L_2} f_2(x).\]
  \end{theorem}
  
\section{Abstract Bregman Divergence}\label{sec:projection}

The Bregman divergence is a robust tool for developing algorithms in convex analysis. Its notion plays a crucial role in simplifying the projection step in many algorithms. In this section, we introduce the concept of Abstract Bregman divergence, which generalises this concept to the classic convex case.
  \begin{definition}[Abstract Bregman Divergence~{\cite{grasmair:2010}}]\label{def:bregman}
    Let $\phi: X \rightarrow \R$ be a $L$-convex function and $x,y\in X$, and let $\lambda\in \partial_L\phi(y)$. We define the $L$-Bregman divergence of $x$ from $y$ with respect to $\lambda$ as 
    \[ D_{\phi}^\lambda(x,y) = \phi(x) - \phi(y) - (\lambda(x) - \lambda(y)) \]
  \end{definition}

    \begin{lemma}\label{lem:triangle}
        Let $a,b,c$ be points in $X$, $\alpha\in \partial_L \phi(a)$ and $\beta\in \partial_L \phi(b)$. Then we have the following equality:
        \begin{equation}\label{eq:triangle}
        D^\alpha_\phi(c,a) + D^\beta_\phi(a,b) - D^\beta_\phi(c,b) = \beta(c) - \beta(a) - (\alpha(c) - \alpha(a))
        \end{equation}
    \end{lemma}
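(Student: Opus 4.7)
The plan is to prove the identity by a direct substitution of the three Bregman divergences using their definitions, and to observe that all occurrences of $\phi$ cancel, leaving only the linear terms on the right-hand side.

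First I would expand each term on the left-hand side according to Definition~\ref{def:bregman}: write $D^\alpha_\phi(c,a) = \phi(c) - \phi(a) - (\alpha(c) - \alpha(a))$, $D^\beta_\phi(a,b) = \phi(a) - \phi(b) - (\beta(a) - \beta(b))$, and $D^\beta_\phi(c,b) = \phi(c) - \phi(b) - (\beta(c) - \beta(b))$. Then I would add the first two expressions; the $\phi(a)$ terms cancel, yielding $\phi(c) - \phi(b) - (\alpha(c) - \alpha(a)) - (\beta(a) - \beta(b))$.

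Next I would subtract the expansion of $D^\beta_\phi(c,b)$. The contribution $\phi(c) - \phi(b)$ cancels with its counterpart from $D^\beta_\phi(c,b)$, and the remaining $\beta$ terms simplify as $- (\beta(a) - \beta(b)) + (\beta(c) - \beta(b)) = \beta(c) - \beta(a)$. Combined with the leftover $-(\alpha(c) - \alpha(a))$, this gives exactly the right-hand side of~\eqref{eq:triangle}.

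There is no real obstacle here: the lemma is a purely algebraic identity, and the point of stating it is that unlike the classical three-point identity for Bregman distances, the residual on the right is not zero but measures the discrepancy between the linearisations $\alpha$ and $\beta$ along the segment from $a$ to $c$. I would end the proof by remarking that when $\alpha = \beta$ (for instance when $\phi$ is $L$-smooth), the right-hand side vanishes and one recovers the familiar three-point identity.
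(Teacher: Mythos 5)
Your proof is correct and follows essentially the same route as the paper's: expand each divergence via Definition~\ref{def:bregman}, cancel all $\phi$ terms, and collect the remaining $\alpha$ and $\beta$ terms to obtain $\beta(c) - \beta(a) - (\alpha(c) - \alpha(a))$. The closing remark about recovering the classical three-point identity when $\alpha=\beta$ is a nice addition but not needed for the result.
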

    \begin{proof}
    Using the definition of the abstract Bregman divergence, we have:
    \begin{align*}
    D^\alpha_\phi(c,a) + D^\beta_\phi(a,b) - D^\beta_\phi(c,b) = & \phi(c) - \phi(a) + \alpha(a) - \alpha(c) \\&+\phi(a) - \phi(b) + \beta(b) - \beta(a)\\& -\phi(c) + \phi(b) +\beta(c) - \beta(b)\\
    =& \alpha(a) - \alpha(c) -\beta(a) + \beta(c)
        \end{align*}
        which proves the result.
    \end{proof}

\section{Abstract Mirror Descent Algorithm}\label{sec:mirror}

In this section, we propose a generalised Bregman Mirror Descent algorithm. The Mirror Descent algorithm for the convex optimisation problem was originally proposed by~Nemirovsky and Yudin~\cite{nemirovsky.ea:1983}. 

\begin{algorithm}[H]
  \caption{Bregman Mirror Descent algorithm for abstract convex functions}\label{alg:conceptualmirror}
  \begin{algorithmic}[1]
    \State Pick a $(L,X)$-convex function $\phi$.
    \State Select $x_0\in X$, $\lambda_0\in \partial_L\phi(x_0)$, and $\{c_k\}_{k\geq 0}\in (0,1)$. Set $k=0$.
    \Repeat
      \State Take $u_k\in \partial_L f(x_k)$
      \State Pick $x_{k+1} \in \Argmin_{x\in X}\{ u_k(x) + \frac{1}{c_k} D^{\lambda_k}_\phi(x,x_k)\}$
      \State\label{step:new_subgradient} Define $\lambda_{k+1} = \lambda_k - c_k u_k$
      \State set $k=k+1$.
    \Until{}
  \end{algorithmic}
\end{algorithm}

Before we proceed with proving the convergence of Algorithm~\ref{alg:conceptualmirror}, a few comments are in order.

Notice that we don't need to assume the sum rule for this algorithm (in particular, Step~\ref{step:new_subgradient}) to be well-defined. Furthermore, it can be seen that $\lambda_{k+1}\in \partial_L \phi(x_{k+1})$. Indeed, by the minimality of $x_{k+1}$, for any $y\in X$:
\begin{align*}
c_ku_k(y) + D_\phi^{\lambda_k}(y,x_k) &\geq c_ku_k(x_{k+1}) + D_\phi^{\lambda_k}(x_{k+1}, x_k)\\
c_ku_k(y) - \lambda_{k}(y) + \phi(y) &\geq c_ku_k(x_{k+1}) - \lambda_{k}(x_{k+1}) + \phi(x_{k+1})\\
\phi(y) - \lambda_{k+1}(y) &\geq \phi(x_{k+1}) - \lambda_{k+1}(x_{k+1}) 
\end{align*}

To prove convergence, we will need the function $\phi$ to satisfy the following assumption:
\begin{assumption}\label{ass:boundedsubgradients}
    There exists a $L-$convex function $\phi: X\to \R$, and real positive numbers $M>0,\alpha\geq 0$, such that for any $x,y\in X$, $c>0$ and any $u\in \partial_L f(y)$, we have
    \begin{equation}\label{eq:boundedsubgradient}
      c(u(x)-u(y))\leq D^\mu_\phi(x,y) + c^{\alpha}M,
    \end{equation}
 where $u\in \partial_L \phi(y)$.
\end{assumption}

\begin{remark}
  It is not necessary for $X$ to be a vector space. In fact, we do not use any assumption on the set $X$, other than it being nonempty.
\end{remark}
 
\begin{example}
  In the classical convex case, Assumption~\ref{ass:boundedsubgradients} is verified for any strongly convex function $\phi$, with coefficient $\sigma>0$ whenever $\|u\|_*^2$ is bounded for any $u\in \partial f(x)$ and any $x\in X$ (which is the usual assumption for showing convergence of the Mirror Descent method). Indeed, in this case,
  \[\langle cu, x-y\rangle \leq \frac{\sigma}{2}\|x-y\|^2 + \frac{1}{2\sigma}\|cu\|^*_2\leq  D_\phi^{\lambda}(x,y) + c^2M\]
  where the first inequality is Fenchel-Young inequality applied to $\frac{\sigma}{2}\|x\|^2$ and the second inequality derives from the boundedness of the subgradients and the fact that the $\sigma$-strong convexity of $\phi$ is equivalent to $D_\phi^\lambda(x,y)\geq \frac{\sigma}{2}\|x-y\|^2$ for any $x,y\in X$, $\lambda\in \partial \phi(y)$.
\end{example}

\begin{remark}
  Choosing a function $\phi$ satisfying Assumption~\ref{ass:boundedsubgradients} may be challenging in some cases.
  When the function $f$ is written as the maximum of a finite number of abstract linear functions, however, this assumption only needs to be verified for a finite number of abstract linear functions $u$.

  Another case which is easier to handle is when the set $X$, instead of being a vector space, is a compact set, and the functions $\phi$, $f$, and all functions $u\in L$ are continuous, which is the case usually considered for the mirror descent method in the classical convex case.
\end{remark}


We also require the following assumption:

\begin{assumption}\label{ass:mirror} 
$L$ is a linear space, 
$\sum_{k=1}^{\infty}c_k=\infty$ and $\sum_{k=1}^{\infty}c_k^\alpha<\infty$ with $\alpha>1$ fixed by Assumption~\ref{ass:boundedsubgradients}.
\end{assumption}

Now we will prove, under the Assumptions~\ref{ass:mirror} and~\ref{ass:boundedsubgradients}, that Algorithm~\ref{alg:conceptualmirror} converges to a solution of the Problem~\ref{problem}.

\begin{theorem}\label{thm:mirrorconvergence} Suppose that a solution $x^*$ to Problem~\ref{problem} exists. Let $(x_k)_{k\in \mathbb{N}}$ be the sequence generated by the Algorithm \ref{alg:conceptualmirror}.  Under the Assumptions~\ref{ass:boundedsubgradients} and~\ref{ass:mirror}, we have
  \[\liminf_{l\to \infty} f(x_l) = f(x^*)\]
\end{theorem}

\begin{proof}
Let $x^*$ be a solution to Problem~\eqref{problem} and let $u_k\in \partial_Lf(x_k)$. Then we have that:
\begin{align*}
0 \leq{} & c_k(f(x_k)- f(x^*))\nonumber\\
\leq{} & c_k (u_k(x_k) - u_k(x^*))\nonumber\\
={} & c_k(u_k(x_k) - u_k(x_{k+1}) + c_k(u_k(x_{k+1}) - u_k(x^*))
\end{align*}

We have from Step~\ref{step:new_subgradient} and Lemma~\ref{lem:triangle}.
\begin{align*}
c_k(u_k(x_{k+1}) - u_k(x^*)) =& \lambda_k(x_{k+1}) - \lambda_{k+1}(x_{k}) - \lambda_k(x^*) + \lambda_{k+1}(x^*)\\
=& D_\phi^{\lambda_k}(x^*,x_k) -  D_\phi^{\lambda_{k+1}}(x^*,x_{k+1}) - D_\phi^{\lambda_k}(x_{k+1},x_k) 
\end{align*}

Using Assumption~\ref{ass:boundedsubgradients}, there exist $\alpha>=0$ and $M>0$ such that:
\[
c_k(u_k(x_k) - u_k(x_{k+1}) \leq c_k^\alpha M + D_\phi^{\lambda_k}(x_{k+1},x_k)
\]

From this we can conclude that 
  \begin{equation}\label{eq:lemf}
  c_k (f(x_k) - f(x^*))\leq D_\phi^{\lambda_k}(x^*,x_k)- D_\phi^{\lambda_{k+1}}(x^*,x_{k+1})+Mc_k^\alpha
  \end{equation}

  Summing for $k=0$ to $l$ in equation \eqref{eq:lemf}, we find that
  \[ \sum_{k=0}^l c_k f(x_{k}) - s_lf(x)\leq D_\phi^{\lambda_{0}}(x,x_{0})- D_\phi^{\lambda_{l+1}}(x,x_{l+1})  + \sum_{k=0}^l M c_k^a\alpha ,  \]
 where $s_l=\sum_{k=0}^l c_k$. 
  Since $D^\lambda_\phi(y,z)\geq 0$ for any $y, z$, we can simplify the inequality above to:
  \[  s_l\left( \min_{1\leq k\leq l}f(x_{k}) - f(x)\right)\leq  D_\phi^{\lambda_{0}}(x,x_{0}) + \sum_{k=0}^l M c_k^\alpha    \]
  That is, for any $x\in X$,
  \[  \min_{1\leq k\leq l+1}f(x_k) \leq f(x) +\frac{D_\phi^{\lambda_{0}}(x,x_{0})}{s_l }+\frac{M\sum_{k=0}^l c_k^\alpha}{s_l}. \]
  Since $\lim_{l\to \infty} s_l = +\infty$ by Assumption~\ref{ass:mirror},  then we obtain that for $x = x^*$,
  \[  \liminf_{l\to \infty} f(x_l) \leq f(x^*) \]

  From this, we conclude that 
    \[  \liminf_{l\to \infty} f(x^*) \leq f(x^*)\qedhere \]
\end{proof}

\begin{remark}
  Note that by construction, $x_{k+1}$ is chosen to be the projection of $x_k$ onto an abstract hyperplane. Indeed, for any $x\in X$ we have that $c_k u_{k}(x)  + D_{\phi}^{\lambda_k}(x,x_k) \geq c_k u_{k}(x_{k+1})  + D_{\phi}^{\lambda_k}(x_{k+1},x_k)$.

  In particular, when $u_k(x)\leq u_k(x_{k+1})$, this implies that $D_{\phi}^{\lambda_k}(x,x_k)\geq D_\phi^{\lambda_k}(x_{k+1},x_k)$.
\end{remark}

\subsection{Implications for the classical convex case}

Assumption~\ref{ass:boundedsubgradients} is more general than the classical assumption that $\phi$ is strongly convex and that the subgradients of the function $f$ are bounded, which allows us to generalise the mirror descent method even in the classical convex case. Indeed, assume that for any $x,y\in X$, $D_\phi(x,y) \geq \sfrac{\sigma}{p}\|x-y\|^p$, for some $\sigma>0$ and $p>1$, and then set $q := \frac{p}{p-1}$. We have the following Fenchel-Young inequality for any $c>0$ and any $u\in X^*$:
\[c\langle u,x-y\rangle\leq \sfrac{\sigma}{p}\|x-y\|_p^p + \sfrac{c^q}{\sigma q}\|u\|^q_q.\]

If we assume that $\|u\|_q^q$ is bounded for any $u\in \partial f(x)$ and any $x\in X$, we can conclude that Assumption~\ref{ass:boundedsubgradients} is verified. In other words, assuming that there exists $\sigma>0$ such that $\phi$ follows the inequality
\[\phi(y)\geq \phi(x) + \langle u, y-x\rangle + \sfrac{\sigma}{p}\|x-y\|_p^p\] for any $x,y\in X$, $u\in \partial f(x)$, it is possible to apply the mirror descent method with step sizes $c_k$ such that
$\sum_{k=1}^{\infty}c_k = +\infty$ and $\sum_{k=1}^{\infty}c_k^{\sfrac{p}{p-1}} < \infty$

\section{Numerical Experiments}

In this section, we demonstrate how Algorithm~\ref{alg:conceptualmirror} can be applied to some examples of abstract convex functions.


Let $X=\R$, $L=\{x\mapsto ax^3+bx\}$ and consider the function $f\coloneqq \max(p_1, p_2)$, where
\begin{align*}
  p_1(x) &= x^3-12x\\
  p_2(x) &= -x^3+6x
\end{align*}

A plot of the function $f$ is shown in Figure~\ref{fig:maxcubic}. Its unique global minimiser is found at $x=3$, where $f(x) = -9$. The function $f$ has two other local minimisers, at $x=-3$ and $x=0$.

\begin{figure}[ht]
\begin{center}
\begin{tikzpicture}
  \begin{axis}[axis lines=center]
    \addplot gnuplot[no markers,samples=1000,very thick] { ((x**3-12*x > -x**3+6*x) ? x**3-12*x : -x**3+6*x) with lines};
  \end{axis}
\end{tikzpicture}
\caption{Plot of the test function for the first experiment}\label{fig:maxcubic}
\end{center}
\end{figure}

We apply Algorithm~\ref{alg:conceptualmirror} with $\phi = \frac{3}{4}x^4$ and set $\lambda_0(x) = \langle \nabla \phi(x_0),x\rangle = \langle x_0^3, x\rangle$.
At every iteration of the algorithm, we need to minimise the function $u_k(x) + D^{\lambda_k}_\phi(x,x_k)$. This function is a polynomial of degree 4 such that the coefficient of the largest degree is always $\sfrac{3}{4}$. Therefore, it always has a minimum. We solve it analytically by first finding all the stationary points of this polynomial (which requires finding the real roots of a cubic polynomial) and then identifying the minimiser(s).
We run the algorithm for 5000 iterations, from the starting points $x_0\in \{-5,-0.5,0.5,5\}$ and using different sequences $c_n$. The results are summarised in Table~\ref{tab:maxcubic}. We provide the value of the function at the 100-th iterate, as well as the iteration $k^*$ at which $f(x_k)-f(x^*) < 10^{-2}$. Figure~\ref{fig:resultsmaxcubic} shows the function values over 5000 iterations for different initial points and different sequences $c_k$.

\begin{table}[!ht]
\begin{center}
\begin{tabular}{cccr}\toprule
$x_0$ & $c_n$                & $x_{100}$ & $k^*$\\\midrule
-5    & $\sfrac{10}{n}$ & $3.01562$ & $118$\\
-0.5  & $\sfrac{10}{n}$ & $3.00914$ & $7$\\
0.5   & $\sfrac{10}{n}$ & $3.01167$  & $82$\\
5     & $\sfrac{10}{n}$ & $2.99686$ & $90$\\\bottomrule
-5    & $\sfrac{1}{n}$ & $-3.0012$ & $>5000$\\
-0.5  & $\sfrac{1}{n}$ & $1.17016$ & $3672$\\
0.5   & $\sfrac{1}{n}$ & $2.48154$  & $188$\\
5     & $\sfrac{1}{n}$ & $3.00124$ & $21$\\\bottomrule
-5    & $\sfrac{1}{n^{0.8}}$ & $0.74113$ & $4363$\\
-0.5  & $\sfrac{1}{n^{0.8}}$ & $2.43474$ & $133$\\
0.5   & $\sfrac{1}{n^{0.8}}$ & $2.99377$  & $126$\\
5     & $\sfrac{1}{n^{0.8}}$ & $3.00165$ & $45$\\\bottomrule
\end{tabular}
\caption{Results of the abstract mirror descent method for various initial points and step sizes}\label{tab:maxcubic}
\end{center}
\end{table}

\begin{figure}[!ht]
\begin{center}
\begin{tikzpicture}
  \begin{groupplot}[group style={group size=2 by 1, xlabels at=edge bottom, ylabels at=edge left},small,xlabel=$n$,ylabel=$f(x_n)-f^*$,width=0.5\columnwidth]
  \nextgroupplot[axis lines=center,ymax=20, title={$c_n=\sfrac{1}{n}$},legend to name={mirroriteratevalues}, legend style={legend columns=2},ylabel near ticks, xlabel near ticks]
    \addplot+[mark size=1pt] table [x=k, y=x1] {results/maxcubic_1_1.dat};
    \addplot+[mark size=1pt] table [x=k, y=x2] {results/maxcubic_1_1.dat};
    \addplot+[mark size=1pt] table [x=k, y=x3] {results/maxcubic_1_1.dat};
    \addplot+[mark size=1pt] table [x=k, y=x4] {results/maxcubic_1_1.dat};
    \legend{$x_0=-5$, $x_0=-0.5$, $x_0=0.5$, $x_0=5$};
  \nextgroupplot[axis lines=center,ymax=20, title={$c_n=\sfrac{1}{n^{0.8}}$}, xlabel near ticks]
    \addplot+[mark size=1pt] table [x=k, y=x1] {results/maxcubic_1_0.8.dat};
    \addplot+[mark size=1pt] table [x=k, y=x2] {results/maxcubic_1_0.8.dat};
    \addplot+[mark size=1pt] table [x=k, y=x3] {results/maxcubic_1_0.8.dat};
    \addplot+[mark size=1pt] table [x=k, y=x4] {results/maxcubic_1_0.8.dat};
  \end{groupplot}
  \path (group c1r1.south east) -- node[below=20pt]{\ref{mirroriteratevalues}} (group c2r1.south west);
\end{tikzpicture}
\caption{Function values at the iterates for the Mirror descent method.}\label{fig:resultsmaxcubic}
\end{center}
\end{figure}

\begin{figure}[!ht]
\begin{center}
\begin{tikzpicture}
  \begin{axis}[axis lines=center]
    \addplot gnuplot[no markers,samples=1000] { ((x**3-12*x > -x**3+6*x) ? x**3-12*x : -x**3+6*x) with lines};
    \addplot+[mark=+,only marks,mark size=1.5pt] table [x=x1, y=f1] {results/iterates_maxcubic.dat};
  \end{axis}
\end{tikzpicture}
\caption{Iterates for $x_0=-5$ and $c_n=\sfrac{2}{n^{0.8}}$.}\label{fig:maxcubiciterates}
\end{center}
\end{figure}

The results of these experiments confirm that the abstract mirror descent method presented in Algorithm~\ref{alg:conceptualmirror} indeed globally minimises abstract convex functions. The method achieves this convergence from a variety of initial points, and with different step size choices $c_k$. Interestingly, unlike many global methods, it does not ``escape'' or ``jump'' over local minimisers, but mostly ``ignores'' them, as shown in Figure~\ref{fig:maxcubiciterates}.

Results from Table~\ref{tab:maxcubic} show that the choice of the step size $c_k$ matters in practice. Even though in all experiments except one the method reached the solution in less than the maximum number of iterations we set (5000), there was considerable variation in the time required to do so, varying from 7 to 3672 iterations, starting from the same initial solution ($x_0=-0.5$). This is a known limitation of the mirror descent method in the convex case, and is therefore expected behaviour here.

\section{Proximal point method}\label{sec:proximal}

In this section, we introduce a generalisation of the proximal point method to the abstract convex case, with Bregman distance. The (convex) Bregman proximal minimisation algorithms were originally proposed by~Censor and Zenios~\cite{censor.ea:1992} (see also \cite{kiwiel:1997}).



Consider $\phi, f: X\rightarrow \R$ be two $L$-convex functions.

\begin{algorithm}[H]
  \caption{Bregman proximal point algorithm for abstract convex functions}\label{alg:proximal}
  \begin{algorithmic}[1]
    \State Select $x_0\in X$, $\lambda_0\in \partial_L \phi(x_0)$ and $\{c_k\}_{k\geq 0} \subset \R^+$. Set $k=0$.
    \Repeat
      \State Pick $x_{k+1} \in \Argmin_{x\in X} \{f(x) +  \frac{1}{c_k}D^{\lambda_k}_\phi(x,x_k)\}$
      \State Set $\lambda_{k+1} = \lambda_k - c_kg_{k+1}$, where $g_{k+1}\in \partial_L f(x_{k+1})$.\label{step:subgradientchoice}
      \State Set $k=k+1$.
      \Until{$0\in \partial_L f(x_k)$}
  \end{algorithmic}
\end{algorithm}

\begin{assumption}\label{ass:proximal} 
  \begin{enumerate}[label=(A\arabic*)]
    \item\label{ass:bounded_sublevel} The sublevel sets of the function $D^\lambda_\phi(\cdot,y)$ are bounded, for any $\lambda \in \partial_L \phi(y)$ and $y\in X$.

\item \label{ass:ck} One of the following two conditions is satisfied:
\begin{enumerate}
    \item $L-L\subset L$ and $c_k=1$ for any $k\geq 1$
    \item $L$ is a linear space,  $\sum_{k=1}^{\infty}c_k=\infty$.
\end{enumerate}

  \end{enumerate}
\end{assumption}

\begin{lemma} When Assumption~\ref{ass:proximal}.\ref{ass:ck} is satisfied,
Step~\ref{step:subgradientchoice} is well defined, provided that the sum rule for the subdifferential (Theorem~\ref{thm:sumsubdif}) is satisfied. 
\end{lemma}

\begin{proof}
    Indeed, in this case a necessary and sufficient condition for optimality is that $0\in \partial_{{\bar{L}}} \phi_{k}(x_{k+1})$, where $\phi_{k}(x) = f(x) +  \frac{1}{c_k}D^{\lambda_k}_\phi(x,x_k)$ and $\bar{L}=L+L-\sfrac{\lambda_k}{c_k}$. Since the sum rule applies, this means that there exists $g_{k+1}\in \partial_L f(x_{k+1})$ and $\lambda_{k+1}\in \partial_L \phi(x_{k+1})$ such that $0 = g_{k+1} + \frac{1}{c_k}(\lambda_{k+1} - \lambda_k)$. This implies that $\lambda_{k+1}$ is well defined.
\end{proof}

For a discussion of the sum rule, we refer the reader to~\cite{diaz-millan.ea:2025}.
 
\begin{theorem}\label{thm:proxmethodconvergence}

Suppose that Assumption~\ref{ass:proximal} is satisfied and the sum rule applies. Let $(x_k)_{k\in \mathbb{N}}$ the sequence of iterates produced by Algorithm~\ref{alg:proximal}, then

  \[\lim_{l\to \infty} f(x_l) = \inf_{x\in X} f(x).\]
\end{theorem}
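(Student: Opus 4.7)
The plan is to establish a telescoping Fejér-type inequality for $D^{\lambda_k}_\phi(x^*,x_k)$ relative to any solution $x^*\in X^*$, then combine it with the monotonicity of $f(x_k)$ and the divergence of $\sum c_k$. Two preliminary facts drive everything. First, the update $\lambda_{k+1}=\lambda_k-c_k g_{k+1}$ in fact yields $\lambda_{k+1}\in\partial_L\phi(x_{k+1})$: writing the Fermat rule for the $\Argmin$ in Step~3 and applying the abstract sum rule gives $0\in\partial_L f(x_{k+1})+\tfrac{1}{c_k}\bigl(\partial_L\phi(x_{k+1})-\lambda_k\bigr)$, where the subtraction of the linear $\lambda_k$ inside the subdifferential and the membership $\lambda_{k+1}\in L$ both use Assumption~\ref{ass:linear_space}. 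Second, evaluating the objective defining $x_{k+1}$ at $x=x_k$ and using $D^{\lambda_k}_\phi(x_k,x_k)=0$ gives
\[
f(x_{k+1})+\tfrac{1}{c_k}D^{\lambda_k}_\phi(x_{k+1},x_k)\leq f(x_k),
\]
so $(f(x_k))$ is non-increasing and converges to some limit $L^\star\geq\inf_X f$.

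For the main inequality, I take $x^*\in X^*$ satisfying Assumption~\ref{ass:level_set} and apply the subgradient inequality for $g_{k+1}\in\partial_L f(x_{k+1})$ at $y=x^*$, then multiply by $c_k$ and use $c_kg_{k+1}=\lambda_k-\lambda_{k+1}$. The resulting expression in $\lambda_k$ and $\lambda_{k+1}$ evaluated at $x^*$ and $x_{k+1}$ is precisely the right-hand side of Lemma~\ref{lem:triangle} with $a=x_{k+1}$, $b=x_k$, $c=x^*$, $\alpha=\lambda_{k+1}$, $\beta=\lambda_k$. Rearranging yields
\[
c_k(f(x_{k+1})-f(x^*))\leq D^{\lambda_k}_\phi(x^*,x_k)-D^{\lambda_{k+1}}_\phi(x^*,x_{k+1})-D^{\lambda_k}_\phi(x_{k+1},x_k).
\]
Dropping the non-negative last term and telescoping from $k=0$ to $N-1$ gives $\sum_{k=0}^{N-1}c_k(f(x_{k+1})-f(x^*))\leq D^{\lambda_0}_\phi(x^*,x_0)-D^{\lambda_N}_\phi(x^*,x_N)\leq D^{\lambda_0}_\phi(x^*,x_0)<+\infty$ by Assumption~\ref{ass:level_set}.

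The conclusion then follows readily. Every term $c_k(f(x_{k+1})-f(x^*))$ is non-negative because $x^*$ is a minimiser, so the series $\sum_{k\geq 0}c_k(f(x_{k+1})-f(x^*))$ converges. If one had $L^\star>f(x^*)$, monotonicity would force $f(x_{k+1})-f(x^*)\geq L^\star-f(x^*)>0$ for all $k$, and Assumption~\ref{ass:ck} would then drive the series to infinity, a contradiction. Hence $L^\star=f(x^*)=\inf_X f$, which is exactly the claim.

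I expect the main technical hurdle to be the rigorous justification of $\lambda_{k+1}\in\partial_L\phi(x_{k+1})$: this rests on the abstract sum rule for $f+\tfrac{1}{c_k}D^{\lambda_k}_\phi(\cdot,x_k)$ and on the identification $\partial_L\bigl(D^{\lambda_k}_\phi(\cdot,x_k)\bigr)(x_{k+1})=\partial_L\phi(x_{k+1})-\lambda_k$, both of which hinge on $L$ being a linear space (Assumption~\ref{ass:linear_space}) together with the calculus developed in \parencite{diaz-millan.ea:nd}. Once these abstract-convexity technicalities are settled, the remainder is the classical Bregman proximal-point telescoping argument of Censor--Zenios and Kiwiel transplanted to the $L$-convex setting via Lemma~\ref{lem:triangle}.
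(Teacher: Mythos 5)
Your proposal is correct and follows essentially the same route as the paper's proof: the three-point identity of Lemma~\ref{lem:triangle} combined with the subgradient inequality for $g_{k+1}$ and the identity $c_k g_{k+1}=\lambda_k-\lambda_{k+1}$, then telescoping, monotonicity of $(f(x_k))$, and Assumptions~\ref{ass:ck} and~\ref{ass:level_set}. The only cosmetic differences are that you fix $x=x^*$ and argue by contradiction instead of dividing by $s_l$, and that you make explicit the step $\lambda_{k+1}\in\partial_L\phi(x_{k+1})$ (which the paper only addresses in the remark on Step~\ref{step:subgradientchoice}).
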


\begin{proof}

  Applying Lemma~\ref{lem:triangle}, we find that at every iteration $k$ and for any $x\in X$,
  \begin{multline*}
    D_\phi^{\lambda_{k+1}}(x,x_{k+1}) + D_\phi^{\lambda_{k}}(x_{k+1},x_{k}) - D_\phi^{\lambda_{k}}(x,x_{k}) \\= \lambda_{k}(x) - \lambda_{k}(x_{k+1}) - (\lambda_{k+1}(x) - \lambda_{k+1}(x_{k+1}))\\
    =  c_k(g_{k+1}(x) - g_{k+1}(x_{k+1}))\leq c_k (f(x) - f(x_{k+1})).
  \end{multline*}

  Summing this inequality over $0\leq k\leq l$ and defining  $s_k = \sum_{j=0}^k c_j$, gives
  \[ D_\phi^{\lambda_{l+1}}(x,x_{l+1}) - D_\phi^{\lambda_{0}}(x,x_{0}) + \sum_{k=0}^l D_\phi^{\lambda_{k}}(x_{k+1},x_{k}) \leq s_l f(x) - \sum_{k=0}^l c_kf(x_{k+1}),\] which implies that
   \[ D_\phi^{\lambda_{l+1}}(x,x_{l+1}) - D_\phi^{\lambda_{0}}(x,x_{0})  \leq s_l f(x) - \sum_{k=0}^l c_k f(x_{k+1}),\] 

  Since the sequence $(f(x_k))_{k\in \mathcal{N}}$ is decreasing, because of the definition of the $x_{k+1}$, and since $D^\lambda_\phi(y,z)\geq 0$ for any $y, z \in X$, we can simplify the inequality above to:
  \[  - D_\phi^{\lambda_{0}}(x,x_{0})  \leq s_l (f(x) - f(x_{l+1}))  \]
  That is, for any $x\in X$,
  \[  f(x_l) \leq f(x) + \frac{D_\phi^{\lambda_{0}}(x,x_{0})}{s_{l-1}}.  \]
  Since $\lim_{l\to \infty} s_l = +\infty$ by the assumption, and since Assumption~\ref{ass:proximal}~\ref{ass:bounded_sublevel} ensures that the function $D_\phi^{\lambda_{0}}(\cdot,x_{0})$ has a bounded sublevel set $S_\alpha = \{x: D^{\lambda_0}(x,x_0)\leq \alpha < +\infty\}$, then we obtain that for any $x\in S_\alpha$,
  \[  \lim_{l\to \infty} f(x_l) \leq f(x) \]

  From this, we conclude that 
  \[  \lim_{l\to \infty} f(x_l) = \inf_{x\in X} f(x)\]
\end{proof}

%
%

\begin{theorem}
  If the set $X^*$ is not empty, and the function $f:X\rightarrow \R$ is lower semi-continuous, then every cluster point of the sequence $x_k$ belongs to $X^*$.
\end{theorem}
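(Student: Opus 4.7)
The plan is to combine the value convergence from Theorem~\ref{thm:proxmethodconvergence} with lower semicontinuity of $f$. From the proof of that theorem, the sequence $(f(x_k))_{k\in\mathbb{N}}$ is monotonically decreasing (a fact used explicitly there to drop the nonnegative $D_\phi^{\lambda_k}(x_{k+1},x_k)$ terms) and converges to $\inf_{x\in X} f(x)$. Since $X^* \neq \emptyset$ supplies some $x^* \in X^*$ with $f(x^*) = \inf_{x\in X} f(x)$, the infimum is finite and is the common limit of every subsequence of $(f(x_k))$.

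Now let $\bar x$ be a cluster point of $(x_k)$, with $x_{k_j} \to \bar x$ along some subsequence. The goal reduces to showing $f(\bar x) \leq \inf_{x\in X} f(x)$, since the reverse inequality holds trivially and will then force $\bar x \in X^*$. The key is lower semicontinuity of $f$ at $\bar x$, i.e.\ $f(\bar x) \leq \liminf_{j\to\infty} f(x_{k_j})$. Because $f$ is $L$-convex, one has $f = \sup_{u \in U} u$ for some $U \subseteq L$, and in the standard abstract-convexity setting the functions in $L$ are continuous with respect to whichever topology on $X$ makes ``cluster point'' meaningful, so $f$ is lsc as a supremum of continuous functions. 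Combining these ingredients,
$$f(\bar x) \;\leq\; \liminf_{j\to\infty} f(x_{k_j}) \;=\; \lim_{l\to\infty} f(x_l) \;=\; \inf_{x\in X} f(x),$$
where the first equality holds because the full sequence $(f(x_l))$ converges, so every subsequential $\liminf$ agrees with the limit. Hence $\bar x \in X^*$.

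The main obstacle is the lsc step, since the excerpt never explicitly equips $X$ with a topology nor posits continuity of the elements of $L$; the very notion of cluster point in the statement tacitly presupposes such structure. A fully self-contained treatment would either add an assumption that each $u\in L$ is continuous (so $f$ is automatically lsc), or phrase the theorem relative to an ambient topology on $X$ compatible with $L$. Once that hypothesis is in place, the remainder of the argument is immediate from Theorem~\ref{thm:proxmethodconvergence} and the monotone decrease of $(f(x_k))$ noted above.
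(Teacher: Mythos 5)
Your argument is essentially the paper's own proof: combine the value convergence $\lim_{l\to\infty} f(x_l)=\inf_{x\in X}f(x)$ from Theorem~\ref{thm:proxmethodconvergence} with lower semicontinuity of $f$ at the cluster point to get $f(\bar x)\leq \inf_{x\in X}f(x)$, hence $\bar x\in X^*$. Your caveat about the lsc hypothesis is well taken, since the paper likewise invokes lower semicontinuity of $f$ (and implicitly a topology on $X$) inside the proof without listing it among the standing assumptions.
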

  \begin{proof}

    Let $\bar{x}$ be a cluster point of the sequence $x_k$. By lower semi-continuity of the function $f$, this implies that $f(\bar{x}) \leq \lim_{k\to \infty} f(x_k)= \inf_{x\in X} f(x)$ and therefore $f(\bar{x})=\inf_{x\in X} f(x)$.
  \end{proof}

  \begin{remark}
    By definition of $x_{k+1}$, we have:
    \begin{equation}\label{eq:xkmin}
      f(x_{k+1}) + c_k D_\phi(x_{k+1},x_k) \leq f(x) + c_k D_\phi(x,x_k).
    \end{equation}
    Therefore, if $f(x)\leq f(x_{k+1})$, then $D^{\lambda_k}_\phi(x_{k+1},x_k)\leq D^{\lambda_k}_\phi(x,x_k)$, and so the point $x_{k+1}$ is a $D^{\lambda_k}_\phi$-projection of $x_k$ onto the $L$-convex set $\Omega = \{x\in X: f(x)\leq f(x_{k+1})\}$.
  \end{remark}

The proximal point method, even the context of convex minimisation, is mostly conceptual and is generally not applied in practice, as it requires solving a subproblem that is as hard as the original problem. It is, however, theoretically important.

\section{Conclusion}

The main purpose of this paper is to propose conceptual algorithms based on the framework of abstract convexity.

We developed two such algorithms, one an abstract mirror descent algorithm and the other an abstract proximal point algorithm, both using a Bregman-like proximal term. We showed their convergence, under classical assumptions, and demonstrated how they can be applied to minimise abstract convex functions. For the abstract mirror descent method, we provided numerical experiments that demonstrate how it can be used to globally minimise nonconvex functions with multiple local minimisers.

One restriction of any proximal-based algorithm (including the convex programming ones) is that its convergence depends on the sum rule for subdifferentiability. Previous work, by the authors of this paper and by other authors, provides conditions for this property to be satisfied. In future research, we will investigate families of abstract linear functions satisfying these properties and adapt our algorithms. The other restriction of proximal-based algorithms is the need to solve a subproblem, and our future research will focus on solving these subproblems for specific classes of functions.

\section*{Acknowledgements}

This research was funded through the ARC discovery project DP180100602.

\section*{Conflict of interest}

The authors declare that they have no conflict of interest.
 
\bibliographystyle{plain}      
\bibliography{references.bib}

\end{document}